\newtheorem{theorem}{Theorem}[section]
\newtheorem{claim}[theorem]{Claim}
\newtheorem{cor}[theorem]{Corollary}
\newtheorem{obs}[theorem]{Observation}
\newtheorem{defi}[theorem]{Definition}
\newcommand{\cl}[1]{\mbox{\ensuremath{\mathbf{#1}}}\xspace}
\newcommand{\la}[1]{\mbox{\sc{#1}}\xspace}
\newcommand{\new}{^{\mathrm{new}}}
\newcommand{\bi}{^{\mathrm{bi}}}
\DeclareMathOperator{\Vol}{Vol}
\title{Search for the end of a path in the \\$d$-dimensional grid and
  in other graphs}
\author{D\'aniel Gerbner\thanks{%
{Hungarian Academy of Sciences, Alfr\'ed R\'enyi Institute of
  Mathematics, P.O.B.~127, Budapest H-1364, Hungary}.
 \nolinkurl
{gerbner.daniel@renyi.mta.hu}.
Supported by the Hungarian National Science Fund (OTKA), grant PD 109537}
 \and Bal\'azs Keszegh\thanks{%
{Hungarian Academy of Sciences, Alfr\'ed R\'enyi Institute of
  Mathematics, P.O.B.~127, Budapest H-1364, Hungary}.
 \nolinkurl
{keszegh@renyi.hu}.
 Supported by the Hungarian
Scientific Research Fund (OTKA), grant PD 108406, NN 102029 (EUROGIGA project GraDR 10-EuroGIGA-OP-003), NK 78439, by the J\'anos Bolyai Research Scholarship of the Hungarian Academy of Sciences, and DAAD.}
\and D\"om\"ot\"or P\'alv\"olgyi\thanks{%
{Institute of Mathematics, E\"otv\"os University, P\'azm\'any P\'eter s\'et\'any 1/C, H-1117, Budapest,
Hungary}.
 \nolinkurl
{domotorp@gmail.com}.
Supported by the
  Hungarian
Scientific Research Fund (OTKA), grant PD 104386 and NN
  102029 (EUROGIGA project GraDR 10-EuroGIGA-OP-003), and the J\'anos
  Bolyai Research Scholarship of the Hungarian Academy of Sciences.}
\and {G\"unter Rote}\thanks{%
{Freie Universit\"at Berlin, Institut f\"ur Informatik,
 Takustra\ss e~9, 14195 Berlin, Germany}. 
 \nolinkurl
 {rote@inf.fu-berlin.de}.
Supported by the
ESF EUROCORES programme EuroGIGA-VORONOI, Deutsche
Forschungsgemeinschaft (DFG): RO 2338/5-1.}
\and G\'abor Wiener\thanks{%
{Department of Computer Science and Information Theory, Budapest
University of Technology and Economics, M\H{u}egyetem rkp. 3., H-1111, Budapest, 
Hungary}.
 \nolinkurl
{wiener@cs.bme.hu}.
Supported by the Hungarian
Scientific Research Fund (OTKA), grant 108947, and the J\'anos
  Bolyai Research Scholarship of the Hungarian Academy of Sciences.}}
\begin{document}

\maketitle

Keywords:
{Separator, graph, search, grid.}      

AMS 2010 Mathematics Subject Classification:
90B40, 
05C85. 


\begin{abstract}
We consider the worst-case query complexity of some variants of certain \cl{PPAD}-complete search problems.
Suppose we are given a graph $G$ and a vertex $s \in V(G)$.
We denote the directed graph obtained from $G$ by directing all edges in both directions by $G'$.
$D$ is a directed subgraph of $G'$ which is unknown to us, except that it consists of vertex-disjoint directed paths and cycles and one of the paths originates in $s$.
Our goal is to find an endvertex of a path by using as few queries as
possible.
A query specifies a vertex $v\in V(G)$,
and the answer is the set of the edges of $D$ incident to $v$,
together with their directions.

We also show lower bounds for the special case when $D$ consists of a single path.
Our proofs use the theory of graph separators.
Finally, we consider the case when the graph $G$ is a grid graph.
 In this case, using the connection with separators, we give asymptotically tight bounds as a function of the size of the grid, if the dimension of the grid  is considered as fixed.
In order to do this, we prove a separator theorem about grid graphs, which is interesting on its own right.
\end{abstract}

\section{Introduction}
This paper deals with the following search problem.
 We are given a simple, undirected, connected graph $G$ and a vertex $s\in V(G)$.
We denote the directed graph obtained from $G$ by directing all edges in both directions by $G'$.
Let $D$ be a directed subgraph of $G'$, which is the vertex-disjoint union of a directed path starting at $s$ and possibly some other directed paths and cycles.
$D$ is unknown to us, and our goal is to identify an endvertex of a
directed path.
We may \emph{query} a vertex $v$, and
as an answer, we learn the edges of $D$ incident to $v$ together with their
directions.
In particular, if the answer is only one incoming edge, then we know
that $v$ is an endvertex.
We analyze the minimum number of queries that are necessary in the worst case.

We give lower bounds in the more restrictive model where we know $D$
is one directed path. Note that if instead of looking for an
endvertex, we look for an ending or a starting vertex of a path
(different from $s$), then this model still gives a lower bound for this easier problem.
In Section~\ref{sec:conclusion} we mention some additional models.



Denote by $h(G)$ the minimum number of queries
needed to find an endvertex in the worst case for any $s\in G$.
If we know that $D$ is one directed path, denote this quantity by $h_P(G)$.

\paragraph{Biseparators and multiseparators.}

To state some of our results we need to define separators of graphs.
This notion can be 
defined in two different ways and both definitions are widely used.
Here we distinguish between the two definitions.

\begin{defi}
  \begin{enumerate}
  \item 
Given a graph $G=(V,E)$, a subset $S\subseteq V$ is called an
\emph{$\alpha$-biseparator} of $G$ if $V\setminus S$ can be divided into two parts, $A$
and $B$, such that there are no edges between $A$ and $B$, and both have
cardinality at most $\alpha |V|$.
\item 
Given a graph $G=(V,E)$, a subset $S\subseteq V$ is called an
\emph{$\alpha$-multiseparator} of $G$ if every connected component of $V\setminus S$
has cardinality at most $\alpha |V|$.
  \end{enumerate}
\end{defi}

Note that $A$ or $B$ in the definition of a biseparator can be empty:
we do not require $V\setminus S$ to be disconnected. Small
biseparators make sense only for $\alpha\ge 1/2$.

Given these definitions, when we write \emph{separator}, it can mean either a biseparator or a multiseparator, as in many cases it makes no difference.
In the literature, the notation $f(n)$-separator can also be found,
where $f(n)$ is an upper bound on the cardinality of $S$ in terms of
the number $n$ of vertices.
In this paper it is more straightforward to fix $\alpha$ and then look for the smallest $\alpha$-separator.
Therefore, we let $s\bi _\alpha(G)$ be the minimum cardinality of an $\alpha$-biseparator in $G$ and $s^m_\alpha(G)$ be the minimum cardinality of an $\alpha$-multi\-se\-pa\-ra\-tor in $G$.

It follows from the definitions that every $\alpha$-biseparator is an
$\alpha$-multi\-se\-pa\-ra\-tor, and thus
 $s\bi _\alpha(G)\ge s^m_\alpha(G)$. In many cases they are of the
 same order of magnitude. In particular, if we have a bound
 $s^m _\alpha(G)\le O(n^c)$
 for a class of graphs
 which is closed under taking subgraphs
 for some $c<1$ and for \emph{some arbitrary} $\alpha<1$, we get
 the same asymptotic bound on
 $s\bi_{1/2}(G)$, by iteratively separating one of the components.
 However, there are cases when multiseparators are much smaller than biseparators.
For example, if $G$ consists of three disjoint cliques of equal size,
all connected to a degree-three vertex, then $s^m_{1/2}(G)=1$ but
$s\bi _{1/2}(G)=\lceil n/6\rceil $.
For any tree, $s^m_{1/2}(G)=1$ but it is not hard to show
 that for a complete ternary tree, $s\bi _{1/2}(G)=\Theta(\log n)$,
see Appendix~\ref{ternary}.
 Finally, if we consider a class of graphs closed under taking
 subgraphs, by repeatedly refining the separation, then it is obvious
 that $s^m_\alpha(G)$ and $s^m_{\alpha'}(G)$ have the same order of magnitude
for any two constants $\alpha$ and $\alpha'$.


\paragraph{Results.}

Our main result establishes a connection between
the biseparators and the search complexity for  general graphs.

\begin{theorem}\label{alsofeles}
For any connected graph $G$ with at least 2 vertices, we have $s\bi _{1/2}(G) \le h_P(G)\le h(G)$.
\end{theorem}

We can prove an upper bound of the same order of magnitude, if every subgraph has small multiseparators.
Note that when bounding $h(G)$, $s\bi (G)$, the larger of the separators, gives the lower bound and $s^{m}(G)$, the smaller one, gives the almost matching upper bound, which implies that indeed for a large class of graphs $s\bi (G)$ and $s^m(G)$ have the same order of magnitude.


\begin{theorem}\label{subhom}
Let $0<\alpha,\beta<1$ be constants,
let $f$ be a monotone
function,
and let $G$ be a graph such that any subgraph $H$ of $G$ has an
$\alpha$-multiseparator of size at most $f(|V(H)|)$.
If
 $f(\alpha x)\le \beta f(x)$ for all $x>0$,
then $$h_P(G)\le h(G)\le \frac{f(|V(G)|)}{1-\beta}.$$
\end{theorem}
The condition on $f$ could be interpreted as having ``at least polynomial
growth''.
The condition is fulfilled by the function $f(x)=\mathrm{const}\cdot
x^c
$ if and only if $c
 \ge \log_\alpha \beta$.
To put it differently, if $\alpha$ and $c>0$ are given, the
theorem applies with $\beta := \alpha^c$.



We also study the search problem for the special case of grid graphs.

\begin{defi}
Let $d$ be a positive integer and $(n_1,\ldots n_d)$ a sequence of positive integers.
The $d$-dimensional \emph{grid graph} of side length $(n_1,\ldots n_d)$, denoted by $G_d(n_1,\ldots n_d)$, has vertex set $\bigtimes_i \{0,1,2, \ldots , n_i-1\}$, and there is an edge between two vertices if and only if they differ in exactly one coordinate and the difference is $1$.
If $n_1=n_2=\dots=n_d$, then we simply write $G_d(n)$.
\end{defi}



We estimate the search complexity of grid graphs as follows.
\begin{theorem}\label{racs}
$\Omega (n^{d-1}/\sqrt{d})\le h_P(G_d(n))\le h(G_d(n))\le O(n^{d-1})$.
\end{theorem}

As a tool, we will prove a bound on the cardinality of separators of grid graphs, using classic results from the theory of vertex isoperimetric problems and cube slicing.

\begin{theorem}\label{parti} The smallest $1/2$-biseparator of the grid graph $G_d(n)$ has cardinality $s\bi(G_d(n)) = \Theta(n^{d-1}/\sqrt{d})$.
\end{theorem}

We note that when considering grid graphs, one could also study the
related problem that the path starting at $s$ is monotone, i.e., if $u$ and $v$ are on the path and $u\leq v$ (according to the usual partial order of the vectors), then the edge between $u$ and $v$ (if it exists) is directed towards $v$.
In this case the needed number of queries reduces dramatically.
Indeed, the trivial algorithm which follows the path uses at most $dn$ queries.
In two dimensions we could improve slightly this upper bound, yet there is a more
significant improvement by 
Xiaoming Sun (personal communication),
who proved that
$8n/5$ queries are enough in two dimensions. From below, at least
$n-2$ queries are needed regardless of $d$ \cite[Lemma 6]{hirsch}.
This problem resembles the
pyramid-path search problem (but it is not exactly the same),
where also a lower bound of $n$ is proved for the two-dimensional
case \cite{pypa}.

\paragraph{Motivation.}

Hirsch, Papadimitriou and Vavasis~\cite{hirsch} have proved worst-case lower bounds for finding Brouwer fixed points for algorithms using only function evaluation.
They showed a lower bound that is exponential in the dimension, disproving the conjecture that Scarf's algorithm is polynomial.
In our language, they have (implicitly) proved that $h(G_d(n))=\Omega(n^{d-2}/d^2)$ \cite[Lemma 16]{hirsch}.
Our
Theorem~\ref{racs} 
 is an improvement of their result, although we do not use the continuous setting but rather focus only on the discretization of the problem.

Later, Papadimitriou \cite{papa} considered similar complexity search problems in great detail and defined corresponding complexity classes \cl{PPA}, \cl{PPAD}, etc.
In his model, an exponential-size graph is given by a \emph{succinct} representation, i.e., by the description of a Turing-machine $T$.
The vertices of the graph correspond to binary sequences of length $n$ and if we input such a sequence to $T$, it outputs all the neighbors of the corresponding vertex in polynomial time (thus the degrees are bounded by a polynomial).
Therefore, in his model, instead of considering query cost, one can work with the classical running time of the algorithm that gets $T$ as input.
If the algorithm uses $T$ as a black box, we get back the query-cost model.

Papadimitriou considered the problem when the maximum degree of the graph is $2$, i.e., it consists of vertex disjoint paths and cycles and we are also given, as part of the input, a degree-one vertex, $s$, and our goal is to output another degree-one vertex.
This search problem is denoted by \la{LEAF}, and the complexity class
\cl{PPA}
 is defined such that \la{LEAF} is complete for
\cl{PPA}.
 (\cl{PPA} stands for ``Polynomial Parity Argument''.)

 Papadimitriou introduced another variant, where the underlying graph is directed ($T$ outputs
both the in- and out-neighbors of its input in this case), the in-
and out-degree of every vertex is at most one, and we are given a
starting vertex $s$ with in-degree zero and out-degree one.
Therefore, the resulting digraph is the vertex-disjoint union of a directed path starting at $s$ and possibly some other directed paths and cycles, exactly like in the problem that we study.
Here our goal can be either to output an in-degree one, out-degree zero vertex (called \la{LEAFDS} problem) or an in-degree plus out-degree equals one vertex (called \la{LEAFD} problem), which means the end of a path, just like in the problem we study.
Thus, the query-cost of \la{LEAFD} is exactly $h(K_{2^n})$.

The complexity classes for which the problems \la{LEAFDS} and \la{LEAFD} are complete are denoted, respectively, by \cl{PPADS} and \cl{PPAD}.
It is easy to see that \cl{PPAD} is contained in both \cl{PPA} and \cl{PPADS}, while an oracle separation is known for the two latter classes \cite{beame}.
Nowadays \cl{PPAD} enjoys huge popularity, as several problems, among
them finding an $\epsilon$-approximate Nash-equilibrium, turned out to
be \cl{PPAD}-com\-plete.
This is why this paper focuses on $h(G)$, the query-cost version of \cl{PPAD}, though most of our results would also hold for the other variants.

An extensive list of \cl{PPAD}-com\-plete problems can be found on Wikipedia.

\section{Upper bounds}

\begin{obs}\label{cut} Suppose that the connected components of $G\setminus S$ are $Y_1,\dots, Y_k$.
If every vertex of $S$ has been queried, we know a $Y_i$ which contains an endvertex (or that an endvertex is in $S$, hence already identified).
\end{obs}

\begin{proof} The answers clearly show how many times we enter and leave $S$ from each component $Y_i$.
If we enter a component $Y_i$ more times than we leave it, then $Y_i$ must contain an endvertex.
If there is no such component, the component containing $s$ must contain an endvertex.
\end{proof}

This simple observation is crucial for our upper bounds and it does
not hold if the answers would contain only the edges leaving the
queried vertex.

\begin{proof}[Proof of Theorem~\ref{subhom}.]
Let us choose an $\alpha$-multiseparator $S_1$ with $|S_1|\le
f(|V(G)|)$ which cuts $G$ into parts $Y_1, \dots, Y_k$, and query all
 vertices of $S_1$. By Observation \ref{cut} we know a part
$Y_j$ which contains an endvertex. Let $G_1$ be $G$ restricted to
$Y_j$ and choose an $\alpha$-multiseparator $S_2$ of size at most
$f(|V(G_1)|)$, which cuts $G_1$ into parts $Z_1, \dots, Z_l$.

Then $S_1\cup S_2$ is a separator of $G$, which cuts
it into parts $Y_1, \dots,Y_{j-1},\allowbreak Y_{j+1},\allowbreak \dots,\allowbreak Y_k,\allowbreak Z_1,\allowbreak
\dots, Z_l$. Thus, by again using Observation \ref{cut}
after asking every vertex of $S_1 \cup S_2$ we
know which part $Z_i$ contains an endvertex.

After this we can continue the same way, defining $G_2$ and asking
$S_3$, defining $G_3$ and asking $S_4$ and so on, until an endvertex is in some $S_i$.
As $|V(G_j)| \le \alpha |V(G_{j-1})|$ for any $j$, one can easily see
that $|V(G_j)| \le \alpha^j |V|$. 
By the assumptions on $f$,
 $f(|S_j|) \le f(|V(G_{j-1})|) \le
f(\alpha^{j-1}|V|) \le \beta^{j-1}f(|V|)$. Altogether at most
$\sum_{j=1}^\infty \beta^{j-1}f(|V|) \le f(|V|)/(1-\beta)$
questions were asked.
\end{proof}

A celebrated theorem of Lipton and Tarjan \cite{lt} states that
planar graphs have $2/3$-separators of size at most
$\sqrt{8}\cdot \sqrt{|V|}$.
Thus we have the following corollary.

\begin{cor} If $G$ is planar, then $h(G) = O(\sqrt{|V|})$.
\end{cor}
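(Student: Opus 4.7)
The plan is to invoke Theorem \ref{subhom} directly, with the Lipton--Tarjan separator theorem supplying the separator-size function $f$ and the fraction $\alpha = 2/3$. The proof is essentially bookkeeping once these pieces are lined up.

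First I would verify the hypothesis of Theorem \ref{subhom}: every subgraph $D$ of a planar graph $G$ is itself planar, since deleting vertices or edges preserves planarity. Consequently the Lipton--Tarjan theorem cited just above the corollary applies to every such $D$, giving a $(2/3)$-separator of cardinality at most $c'\sqrt{|V(D)|}$, where $c'$ is the Lipton--Tarjan constant (classically $2\sqrt{2}$, or the improved $1.97$ from \cite{dv}). Setting $f(x) = c'\sqrt{x}$ and $\alpha = 2/3$, the hypothesis of Theorem \ref{subhom} is satisfied, and plugging in yields $h_i^A(G) \le f(|V|)/(1-\alpha) = 3c'\sqrt{|V|}$, which is the stated bound with $c = 3c'$.

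The one item that requires extra attention--and the main ``obstacle'' in an otherwise one-line proof--is whether $f(x) = c'\sqrt{x}$ really qualifies as subhomogeneous in the sense of the definition: $f(\alpha x) \le \alpha f(x)$. Strictly this fails, since $\sqrt{\alpha x} = \sqrt{\alpha}\,\sqrt{x}$ and $\sqrt{\alpha} > \alpha$ for $\alpha \in (0,1)$. However, inspecting the proof of Theorem \ref{subhom} shows that subhomogeneity is used only to majorize the sum $\sum_{j\ge 1} f(\alpha^{j-1}|V|)$ by a convergent geometric series. For the square root this sum is still geometric--its common ratio is $\sqrt{\alpha}$ rather than $\alpha$--and sums to $c'\sqrt{|V|}\,/\,(1-\sqrt{\alpha})$. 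Thus the same argument goes through with a possibly different constant: for $\alpha = 2/3$ the factor $3 = 1/(1-\alpha)$ is replaced by $1/(1-\sqrt{2/3}) \approx 5.45$, and we still obtain $h_i^A(G) = O(\sqrt{|V|})$, which is exactly the content of the corollary.
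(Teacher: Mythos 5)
Your proof is exactly the paper's: the corollary is presented as an immediate consequence of Theorem \ref{subhom} combined with the Lipton--Tarjan $2/3$-separator theorem (every subgraph of a planar graph being planar), which is precisely your argument. Your extra remark is correct and welcome: $f(x)=c'\sqrt{x}$ indeed fails the paper's literal definition of subhomogeneity, but since the proof of Theorem \ref{subhom} only needs monotonicity and the convergence of $\sum_j f(\alpha^{j}|V|)$, the geometric series with ratio $\sqrt{2/3}$ still gives $h_i^A(G)\le c'\sqrt{|V|}/(1-\sqrt{2/3})$, which establishes the corollary since the constant $c$ there is unspecified.
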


Now, let us look at $d$-dimensional grid graphs.
Miller, Teng and Vavasis \cite{mtv} introduced 
the so-called overlap graphs for every $d$ and proved that
every member $G$ of the class has separator of size
$O(|V(G)|^{(d-1)/d})$. They
mention that any subset of the $d$-dimensional infinite grid graph
belongs to the class of overlap graphs. The
polynomial function $f(x)=cx^{(d-1)/d}$ satisfies the assumption of
Theorem~\ref{subhom}.
Since
 $|V(G_d(n))|=n^d$,
this 
 implies that $h(G)=O(n^{d-1})$.
 Here we show that
the multiplicative constant is less than~3.

\begin{theorem}\label{th_cycle} $h(G_d(n))\le (2+\frac1{2^{d-1}-1})n^{d-1}$.
\end{theorem}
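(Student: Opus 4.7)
The plan is to give an explicit recursive slicing algorithm based on Observation~\ref{cut}. Starting from the cube $C_d(n)$, at each step I take the current sub-box, query all vertices on the midpoint hyperplane perpendicular to its longest axis, and use Observation~\ref{cut} to identify which of the two halves contains $t$; I then recurse on that half. It is convenient to group the queries into ``rounds'' of $d$ consecutive cuts each, so that one round reduces a cube of side $m$ to a sub-cube of side $m/2$.

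For the cost of one round starting from a cube of side $m$: the first cut is a hyperplane with $m^{d-1}$ vertices; after it the sub-box has shape $(m/2)\times m^{d-1}$, so the second cut has only $m^{d-1}/2$ vertices; in general the $i$-th cut ($i=1,\dots,d$) operates on a sub-box of shape $(m/2)^{i-1}\times m^{d-i+1}$ and thus has separator $m^{d-1}/2^{i-1}$. Summing the inner geometric series, the per-round cost is $m^{d-1}(2-1/2^{d-1})$. Over rounds the side length $m$ halves, so the total number of queries is an outer geometric series with ratio $(1/2)^{d-1}$, which sums to give the bound $(2+1/(2^d-1))n^{d-1}$ claimed.

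The key technical step, and the main obstacle I foresee, is squeezing the exact constant $(2+1/(2^d-1))$ out of the analysis. A naive bookkeeping of the two nested geometric sums (per-round cost $m^{d-1}(2-1/2^{d-1})$ combined with halving of $m$) only yields the slightly weaker $(2+1/(2^{d-1}-1))n^{d-1}$. Closing the gap requires a small additional saving, which I expect comes from observing that at the end of a round the path's entry into the new sub-cube has already been located among the separator queries of that round, so the source of the recursive subproblem lies on a face of the sub-cube and the first cut of the next round can be carried out with a smaller effective separator—exactly the factor of $2$ needed to pass from $1/(2^{d-1}-1)$ to $1/(2^d-1)$. Once this amortization is justified, the remainder is a routine manipulation of nested geometric sums.
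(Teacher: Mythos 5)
Your slicing scheme is exactly the paper's: cut with axis-parallel hyperplanes, cycling through the $d$ directions so that each cut halves the current box, and use the separator argument of Theorem~\ref{subhom} (via Observation~\ref{cut}) to keep the end-vertex confined. Your cost accounting of this scheme is also correct: the cut sizes are $n^{d-1},n^{d-1}/2,\dots,n^{d-1}/2^{d-1}$, then the first cut of the next round has the \emph{same} size $n^{d-1}/2^{d-1}$ as the last cut of the previous one, and summing gives $\bigl(2-2^{-(d-1)}\bigr)/\bigl(1-2^{-(d-1)}\bigr)\cdot n^{d-1}=\bigl(2+1/(2^{d-1}-1)\bigr)n^{d-1}$. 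It is worth noting that the paper evaluates the very same scheme and writes the total as $\sum_{i\ge0}2^{-i}n^{d-1}+\sum_{j\ge1}2^{-jd}n^{d-1}$; since the duplicated terms are in fact $2^{-j(d-1)}n^{d-1}$, the paper's own proof as written also only yields $\bigl(2+1/(2^{d-1}-1)\bigr)n^{d-1}$, i.e.\ the discrepancy you noticed is a slip in the paper's final summation rather than a missing idea you failed to find. (One smaller point you gloss over: Observation~\ref{cut} applied to the current sub-box alone is not legitimate, because the start of the path inside that box is unknown; the paper fixes this by applying Observation~\ref{cut} to the whole grid with the union of all previously asked hyperplanes as the separator. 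Your cuts are nested hyperplanes, so the same fix works verbatim, but it should be said.)

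The genuine gap is your proposed amortization to reach the constant $2+1/(2^d-1)$. Knowing where the path enters the new sub-cube does not let you query a ``smaller effective separator'': to decide which half of the sub-cube contains $t$ via Observation~\ref{cut} you must ask \emph{every} vertex of a set disconnecting the two halves, and a hyperplane of the sub-cube has $(m/2)^{d-1}$ vertices no matter where the path enters --- the path may cross any cutting hyperplane arbitrarily often, at arbitrary places, regardless of its entry point. Indeed, the paper treats the unknown start inside the sub-box as an obstacle to be circumvented (by taking unions of separators), not as a source of savings, and Theorem~\ref{separ} shows that queried sets must eventually separate, so no factor-of-two discount on a single cut can be conjured from knowledge of the entry vertex. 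The honest conclusion of your argument is the bound $\bigl(2+1/(2^{d-1}-1)\bigr)n^{d-1}$, which is what the scheme actually delivers and which still gives the $O(n^{d-1})$ upper bound with a small explicit constant; the sharper constant in the statement is not established by your sketch, nor, as far as the summation above shows, by the paper's.
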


\begin{proof}
We follow the proof of Theorem \ref{subhom}, but the cuts we use
are always axis-aligned hyperplanes, which cut the current part into two
smaller grid graphs. More precisely, for any $i$
let $j \equiv i \bmod d$, $0\leq j \leq d-1$; now
$S_i$ is a hyperplane perpendicular to
the $j^{th}$ coordinate axis,
and it cuts $G_{i-1}$ into two parts of
size at most $|V(G_{i-1})|/2$. One can easily see that this is
possible and $|S_{i+1}|\le |S_i|/2$, except if $j=0$, in
which case $|S_{i+1}|\le |S_i|$. This means that there are at most
$$n^{d-1}(1+1/2+1/4+\ldots+1/2^{d-1})(1+1/2^{d-1}+1/2^{2(d-1)}+\ldots)$$
$$\le n^{d-1}(2-1/2^{d-1})\frac1{1-1/2^{d-1}}=n^{d-1}\left(2+\frac1{2^{d-1}-1}\right)$$
queries.
\end{proof}

\section{Lower bounds}\label{lower}
Before proving Theorem \ref{parti} which claims that any $1/2$-separator in the grid
graph $G_d(n)$ has cardinality $\Omega(n^{d-1}/\sqrt{d})$, we present a slightly weaker result, as it has a short proof not using results from the theory of isoperimetric problems.

\begin{claim}
Any $\alpha$-multiseparator in the grid graph $G_d(n)$ has cardinality at least $(1-\alpha) n^{d-1}/d$ for $\alpha\ge 1/2$.
\end{claim}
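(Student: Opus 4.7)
The plan is to deduce this bound directly from Theorem \ref{parti} by a blow-up argument that reverses the construction $G \mapsto G^{4,4}$. Since $G_d(n) = G^{4,4}$, every vertex $v$ of $G$ corresponds to a disjoint $4 \times 4$ block $B(v)$ of $16$ vertices in $G_d(n)$. Starting from an arbitrary $\alpha$-separator $C$ of $G$, I would form its blow-up $\tilde C := \bigcup_{v \in C} B(v) \subseteq V(G_d(n))$, so that $|\tilde C| = 16|C|$.

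The main step is to verify that $\tilde C$ is itself an $\alpha$-separator of $G_d(n)$. For any $v \notin C$ the block $B(v)$ is an induced connected $4 \times 4$ subgrid, and from the definition of $G^{4,4}$ it is immediate that two blocks $B(v_1), B(v_2)$ with $v_1 \ne v_2$ contain an edge of $G_d(n)$ between them if and only if $v_1, v_2$ are adjacent in $G$. Consequently the connected components of $G_d(n) \setminus \tilde C$ are in natural bijection with those of $G \setminus C$, each enlarged by a factor of exactly $16$. In particular, the largest component of $G_d(n) \setminus \tilde C$ has at most $16\alpha |V(G)| = 16\alpha \cdot n^d/16 = \alpha |V(G_d(n))|$ vertices, so $\tilde C$ is indeed an $\alpha$-separator of $G_d(n)$.

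Applying Theorem \ref{parti} to $\tilde C$ then yields $16|C| = |\tilde C| \ge (1-\alpha) n^{d-1}/d$, which rearranges to the claimed bound $|C| \ge (1-\alpha) n^{d-1}/(16d)$. I do not expect any real obstacle here: the only piece of content beyond invoking Theorem \ref{parti} is the bijection between components, which is a routine consequence of how adjacency in the blown-up graph $G^{4,4}$ is defined. An alternative would be to re-run the induction of Theorem \ref{parti} on the non-cubic box $n/4 \times n/4 \times n \times \cdots \times n$ directly, but the blow-up reduction is cleaner and exactly accounts for the extra factor of $16$ appearing in the bound.
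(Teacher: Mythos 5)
Your proof is correct, but it takes a different route from the paper. The paper omits the proof and indicates that one should re-run the induction of Theorem \ref{parti} directly on the non-cubic box $n/4 \times n/4 \times n \times \dots \times n$, with the initial axis chosen orthogonal to the first two (shortened) axes, so that the lines used in the counting argument still have length $n$. You instead reduce the Claim to Theorem \ref{parti} as a black box via the block blow-up: since $4$ divides $n$ and $G_d(n)=G^{4,4}$, each vertex of $G$ corresponds to a connected $16$-vertex block, blocks are adjacent exactly when the underlying vertices are, so replacing a separator $C$ of $G$ by the union $\tilde C$ of its blocks multiplies both the separator size and every component size (and $|V|$) by exactly $16$, making $\tilde C$ an $\alpha$-separator of $G_d(n)$ and yielding $16|C|\ge (1-\alpha)n^{d-1}/d$. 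This is a clean and complete argument; its one tacit assumption is that you inherit the hypothesis $\alpha\ge 1/2$ from Theorem \ref{parti}, which is exactly the regime in which the Claim is applied ($\alpha=1/2$), so nothing is lost. Compared with the paper's sketch, your reduction avoids re-verifying the induction on non-cubic grids and makes the factor $16$ completely transparent, at the price of only working when the grid is an exact blow-up (i.e.\ $4\mid n$), whereas the paper's re-done induction would apply to such boxes directly without that divisibility assumption.
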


\begin{proof}
We use induction on $d$. The claim is trivial
for $d=1$. Let us denote by $S$ an $\alpha$-multiseparator.

Let us choose an arbitrary axis, and denote by $\mathcal{L}$
the $n^{d-1}$ parallel lines in the grid which go in that
direction. Let $\mathcal{L}'\subset \mathcal{L}$ be the set of
those lines which intersect $S$.  Note that every other element of
$\mathcal{L}$ contains vertices only from one component of $G\setminus S$. If
$|\mathcal{L}'|\ge (1-\alpha) n^{d-1}/d$, then we are done.
Hence we can suppose $|\mathcal{L}'|< (1-\alpha) n^{d-1}/d$.

Elements of $\mathcal{L}'$ cover less than $(1-\alpha) n^d/d$
points, hence for any component $C$ of $G\setminus S$, the other components together contain at least $((1-\alpha) d-(1-\alpha))n^d/d$ vertices, which are not covered by elements of
$\mathcal{L}'$. This means that there are at least $(1-\alpha)
(d-1)n^{d-1}/d$ elements of $\mathcal{L}$ which contain
only vertices not in $C$. Now consider a
hyperplane in the grid, orthogonal to the direction of the
lines of $\mathcal{L}$, and denote by $\cal H$ the vertices of $G_d(n)$ that belong to the hyperplane. Clearly, $\cal H$ contains at least
$(1-\alpha)(d-1)n^{d-1}/d$ elements not in $C$, hence $S\cap \cal H$ is an $\alpha'$-multiseparator of $\cal H$ (with $\alpha':=1-(1-\alpha)(d-1)/d$) and so we can apply induction on each of these $(d-1)$-dimensional hyperplanes.

By induction, there are at least $(1-\alpha)(d-1)n^{d-2}/d(d-1)$ elements of $S$ in every such
hyperplane, which gives at least $n(1-\alpha)n^{d-2}/d=(1-\alpha) n^{d-1}/d$ elements in total.
\end{proof}

Before proving the stronger version of this result, we need to introduce some notations and results.

Let $A$ be an arbitrary set of vertices. The set of vertices that are not in $A$, but are connected to some vertex of $A$ is called the \emph{boundary} of $A$, denoted by $\partial A$.
Following the notations of Bollob\'as and Leader \cite{bollobas}, we define an order on the vertices, the simplicial order, by setting $x < y$ if
$\sum x_i <\sum y_i$, or $\sum x_i =\sum y_i$ and for some $j$
we have $x_j > y_j$ and $x_i = y_i$ for all $i < j$.
This coincides with the lexicographic order according to the vector
$(\sum x_i,-x_1,-x_2,\dots,-x_n)$.

\begin{theorem}[Bollob\'as and Leader \cite{bollobas}]\label{bollobas}
In $G_d(n)$, among sets of vertices of a given size, the initial
segment of the simplicial order has the smallest boundary.
\end{theorem}

The special case $n=2$, i.e., the hypercube, was previously treated by Harper \cite{harper}, while the unbounded case of $n=\infty$ was solved by Wang and Wang \cite{wang}.
We note that in the paper of Bollob\'as and Leader the definition of boundary is different: they also include $A$ in $\partial A$.

We will also need some results about the volume of slices of a cube, i.e., intersections of the cube with specific hyperplanes. For a contemporary approach to this area we refer to \cite{bsplines}. In the next theorem $H^d(t)$ denotes the following set in the $d$-dimensional unit cube $I^d$: $H^d(t)=\{\,x\in I^d\mid \sum x_i=t\,\}$; $\Vol_{i}$ denotes the $i$-dimensional volume of some set of dimension $i$.

\begin{theorem}[\cite{
polya, bsplines}]\label{cubeslice}
$\lim_{d\to\infty}
\Vol_{d-1}\bigl(H^d(d/2+s\sqrt{d})\bigr)=\sqrt{\frac{6}{\pi}}e^{-6s^2}$,
for each fixed $s$.
\end{theorem}

Let $L_k$ denote the $k$-th \emph{layer} of $G_d(n)$: the
 set of all vertices in $G_d(n)$ whose coordinates sum
to $k$.
The layer range from 0 to $(n-1)d$.
We define the size of the ``middle-most'' layers
$Z_{n,d}$ by
\begin{align*}
Z_{n,d} &:=
\begin{cases}
|L_{ ((n-1)d-1)/2 }|
=|L_{ ((n-1)d+1)/2 }|,
& \text{for $(n-1)d$ odd,}\cr
\min\{|L_{ (n-1)d/2-1 }|,|L_{ (n-1)d/2 }|,|L_{ (n-1)d/2+1 }|
\},
& \text{for $(n-1)d$ even}.\cr
\end{cases}
\\
Z_{n,d}^{\max} &:=
\begin{cases}
|L_{ ((n-1)d-1)/2 }|
=|L_{ ((n-1)d+1)/2 }|=Z_{n,d},
& \text{for $(n-1)d$ odd,}\cr
|L_{ (n-1)d/2 }|, 
& \text{for $(n-1)d$ even}.\cr
\end{cases}
\end{align*}
In the even case, we actually know that the middle level
$L_{ (n-1)d/2 }$ is the largest of the three levels in the definition of
$Z_{n,d}$, as the levels decrease symmetrically in size
from the middle to the ends~\cite{bruijn}.
From discretizing the above theorem,
one can obtain the following bound on $Z_{n,d}$.
Its proof can be found in Appendix \ref{app:1}.

\begin{cor}\label{appendix1}
For every $d$, there exists a constant $C_d$ such that
\begin{align*}
 \nonumber 
  Z_{n,d}
&= C_d/\sqrt d\cdot n^{d-1}\pm O(n^{d-2}) \text{ and}
\\  Z_{n,d}^{\max}
&= C_d/\sqrt d\cdot n^{d-1}\pm O(n^{d-2}).
\end{align*}
 $C_d \to \sqrt{{6}/{\pi}}$ as $d\to\infty$.
\end{cor}

Now we are ready to prove Theorem \ref{parti}. 

\begin{proof}[Proof of Theorem \ref{parti}]
We start with the lower bound.
Let us denote by $S$ a $1/2$-biseparator which separates
the vertex set $A$ and $B$ (such that $V=A \cup B\cup S$). If $|S|\ge
Z_{n,d}$ we are done. Thus we suppose
 that $|S|<Z_{n,d}$.
 Denote by $A'$ the vertex set of size $|A|$ which is an initial
 segment of the simplicial order. By Theorem \ref{bollobas} we know
 that $|S|\ge|\partial A|\ge|\partial A'|$.

By the definition of the simplicial order, $\partial A'$ is contained
in the union of two successive layers $k$ and $k+1$:
$\partial A'=P_1\cup P_2$,
where  $P_1\subseteq L_{k  }$
and  $P_2\subseteq L_{k+1}$.
First we claim that $k$ must be very close to the middlemost layer.
More precisely, if $nd$ is odd, we can assume $k=\frac{nd-1}2$,
and if $nd$ is even, we can assume $k=\frac{nd}2-1$
or $k=\frac{nd}2$.

We treat only the odd case, the even case being similar.
First, we show that $A'$ must reach at least level $k=\frac{nd-1}2$.
If $A'$ were disjoint from $L_{k}$, we would get
$$
|A|+|S|
=
|A'|+|S|
<
|A'|+Z_{n,d} = |A' \cup L_{k}|
\le n^2/2,
$$
since the last set contains only vertices in the lower half of the
levels.
This contradicts the requirement fact that $A\cup S$ must cover at least half
of the vertices.
Secondly, if $A'$ would contain vertices of level $k+1$, it would
contain more than the levels $0,1,\dots,k$ which make up half of all
vertices.
This is again a contradiction to the
 $1/2$-biseparator property.

 By the definition of $Z_{n,d}$, we have now established that each of the
 two central layers $L_{k}$ and $L_{k+1}$ contains at least $Z_{n,d}$
 points.
To conclude the proof, we show that the separator
$\partial A'$ which is
contained in the two layers $L_{k}$ and $L_{k+1}$ must have size
at least
$Z_{n,d} -O(n^{d-2})$.
If a vertex $v=(x_1,\dots, x_d)$ of $L_{k+1}$ is not in $P_2$,
 then the adjacent vertex
$v^-$ defined by
 $v^-=(x_1,\dots x_{d-1},x_d-1)$ must be in $P_1$ unless it
 is not a point of the grid $ G(n,d)$ (i.e., $x_d=0$):
$$
(L_{k+1} \setminus P_2)^- \cap G(n,d) \subseteq P_1
$$
Since the number of vertices of $L_{k+1}$ for which $x_d=0$ is
$O(n^{d-2})$, we obtain
$$
|L_{k+1}| - |P_2| - O(n^{d-2}) \le |P_1|,
$$
from which the bound
$|\partial A'|=|P_1|+| P_2|
\ge Z_{n,d} -O(n^{d-2})$ follows.

For the upper bound, we simply take the central layer
$L_{ \lfloor(n-1)d/2\rfloor }$ 
of size $Z_{n,d}^{\max}$
as a biseparator.
\end{proof}



Now we are ready to prove Theorem~\ref{alsofeles}, that $s\bi_{1/2}(G)\le h_P(G)$.

\begin{proof}[Proof of Theorem~\ref{alsofeles}.]
We will use an adversary argument for the lower bound on the
number of queries. The
adversary will try to answer the queries in such a way that the
discovery of the endvertex by the searcher is delayed as much as possible. The
adversary need not choose a path $D$ in advance, but it is required that
the answers remain consistent with \emph{some} path.

  Let $Q$ denote the vertices that have been queried so far in the search.
	We will show that the adversary can achieve that after the other end of the path is found, $Q$ becomes a $1/2$-biseparator.
	The adversary maintains a component $C$ of $V-Q$, see Figure~\ref{strategy}.
$C$~is the set of vertices which can possibly be the endvertex of the  path.
(The adversary will follow a greedy strategy of
keeping this set as large as possible.)
%
In addition to $C$, the adversary maintains a path $P$ between
  $s$ and some vertex $p\in C$, which will be part of the final path
  and for which $P\cap C=\{p\}$.
The remaining components of $V-Q$ are partitioned into two sets
 $V\setminus (Q\cup C) = A\cup B$  such that
  both $A$ and $B$ contain at most $|V|/2$ vertices and there are no edges between $A$ and $B$.
Thus we always have a partition into four disjoint sets
$V = Q \cup A \cup B \cup C$.
  The adversary can reveal all these data 
to the searcher as free additional information. 
  Initially, $C=V$, $p=s$ and $Q=A=B=\emptyset$.

\begin{figure}[htb]
  \centering
  \includegraphics{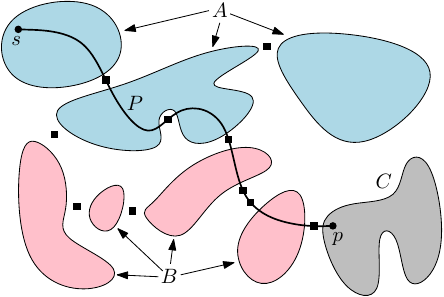}
  \caption{A schematic drawing of the situation maintained by the adversary.
The queried vertices, $Q$, are marked by squares.}
  \label{strategy}
\end{figure}

  The strategy is the following. 
If the queried vertex $q$ is in $Q$,
the adversary repeats the previous answer for this vertex.
	If $q\in P\setminus\{p\}$, the adversary answers by reporting the ingoing and outgoing
  edge of $P$ at that vertex.  If $q\notin C \cup P$, then the answer is that ``the path does
  not pass through this vertex.'' In these cases, no new information
  is revealed to the searcher. The vertex $p$, the set $C$, and the path
  $P$ remain unchanged; the only change is that $q$ is moved from $A \cup B$ to $Q$.

  Let us now look at the case $q\in C$.
  Let $C\setminus \{q\} = D_{1}\cup D_{2}\cup \dots\cup D_{m}$ be the
  partition of  $C\setminus \{q\}$ into $m\ge 1$ connected components.
The adversary chooses a largest component $D_{j}$, and
  will answer in such a way that the new set $C$ becomes
$C\new=D_{j}$.

\begin{figure}
  \centering
  \includegraphics[scale=1.1]{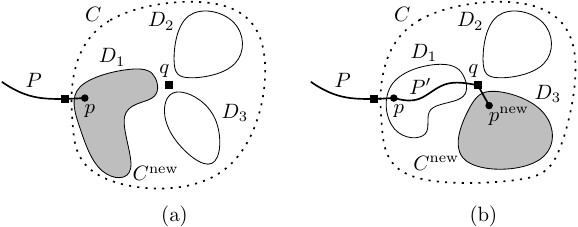}
  \caption{Updating the set $C$ after a query $q$}
  \label{response}
\end{figure}

 Therefore, if
  $C\new $ contains $p$, the answer is again ``the path does
  not pass through this vertex,'' see Figure~\ref{response}a.
  The current endpoint $p$ and the path
  $P$ are unchanged.
  If $C\new $ does not contain $p$ (including the case $q=p$), then choose $p\new \in C\new $
  to be a neighbor of $q$, see Figure~\ref{response}b. As $q$ was a possible endpoint of the path
  before this step, there is a path $P\new$  from $p$ to $q$
  which lies in $C\setminus C\new $. The adversary uses $P\new$ and
  the edge $qp\new $ to extend the path $P$ to a longer path
  $P\new$. (This is the only case when the path is updated.)
  The adversary reports the last arc of $P\new$ as the ingoing arc at
  $q$ and $qp\new $ as the outgoing arc.

To maintain the invariant that $|A|,|B|\le |V|/2$, we go through
the components
  $D_i\ne C\new$ and  add them either to $A$ or to $B$ (to eventually obtain $A\new $ and $B\new $), whichever is smaller.
If, for example, $|A|\le |B|$, then $|A|+|D_i|\le |B|+|C\new|\le |V|/2$
as $A,D_i,
 B, C\new$ are disjoint subsets of $V$.
Therefore, the invariant is maintained.

  The searcher can only identify $t$, the end of the path, when
 $|C|$ becomes~$1$.
By assumption, the graph $G$ has at least two vertices and is
connected,
and therefore $Q\ne \emptyset$. Thus, 
at this point,
$$\min\{|A|,|B|\} \le |V\setminus (Q\cup C)|/2\le (|V|-1-1)/2
= |V|/2-1.$$
We can now add the singleton set $C=\{t\}$ to the smaller of $A$ and $B$
without exceeding the size bound $|V|/2$.
The set $Q$ of queried vertices forms thus a $1/2$-biseparator.
\end{proof}

\begin{cor}
$h_P(G_d(n))=\Omega( n^{d-1}/\sqrt{d})$.
\qed
\end{cor}

Theorem \ref{racs} summarizes the above results. The lower and
upper bounds are quite close. Specifically, if we consider $d$ as fixed, then the theorem gives exact asymptotics in $n$ for the
needed number of queries.

\section{Concluding Remarks}
\label{sec:conclusion}

Here we mention three more variants of the problem.

In the first variant, we consider any directed subgraph of $G'$ and a vertex $s$ with larger out-degree than in-degree. In this version there is a vertex with higher in-degree than out-degree, our goal is to find such a vertex. All of our algorithms work in this case, and obviously the same lower bounds hold.

In the second variant, $D$ consists of directed paths and cycles, but we also assume that they cover every vertex. This is a special case of our model, hence the upper bounds hold. However, a lower bound similar to Theorem \ref{alsofeles} is not plausible, as there are graphs that have only big separators, yet there are only a few valid choices for $D$. For example if $G$ contains a vertex of degree one, different from the source, then this vertex must be the endvertex. But in case of grid graphs we can show that the additional assumption on $D$ does not make the problem much easier.

Denote by $h_U(G)$ the minimum number of queries
needed to find an endvertex in the worst-case for any $s\in G$.
Now we show how to give a lower bound for $h_U(G_d(n))$.
Let us suppose we are given an $r_1 \times r_2 \times r_3 \times
\dots \times r_d$ grid graph $G$. Then let $G^{4,4}$ denote the
$4r_1 \times 4r_2 \times r_3 \times \dots \times r_d$ grid graph.

\begin{theorem}\label{kapcs} Let $G$ be a grid graph. Then $h_P(G) \le h_U(G^{4,4})$.
\end{theorem}

The proof of this theorem can be found in Appendix \ref{app:2}.

One can easily see that if 4 divides $n$ and $G$ is the $n/4
\times n/4 \times n \times \dots \times n$ grid graph, then
$G_d(n)=G^{4,4}$. We need a lower bound on the size of
separators in $G$. It is easy to see that if we replace every vertex of $G$ by 16 vertices to get $G_d(n)$, an $\alpha$-separator is replaced by an $\alpha$-separator, hence the same lower bound of $\Omega(n^{d-1}/\sqrt{d})$, divided by 16, holds for $G$.

\begin{cor} $ \Omega (n^{d-1}/\sqrt{d}) \le h_U(G_d(n)) \le O(n^{d-1})$.
\end{cor}

In the third variant, $D$ is undirected.
Our goal is to find another endvertex and the answer to the query is the at most two incident edges.
Obviously, this is a harder problem than the directed variant.
Hence our lower bounds hold, and one can easily modify our proofs to get the same upper bounds as well.
For example, in Observation~\ref{cut}, the endvertex is in the component $Y_i$ which is connected to $S$ by an odd number of edges, counting an extra edge for the component of $s$.

Finally, a straightforward application of our proofs gives the asymptotics to a question recently asked on MathOverflow \cite{MO}, which is the following.
Given a path $P_1$ from the bottom-left vertex of an $n \times n$ grid to its top-right vertex, and another path $P_2$ from its top-left vertex to its bottom-right vertex, how many queries are needed to find a vertex contained in both paths?
The proofs of Theorems~\ref{alsofeles} and \ref{th_cycle} can be adapted to show that $\Theta(n)$ queries are necessary and sufficient.

\subsection*{Acknowledgment}
We would like to thank our anonymous referee for the remarks that improved the presentation of the paper.

\appendix

\section{Biseparators for Ternary Trees}
\label{ternary}
We show that a rooted ternary tree with $k+1$ complete levels has
 $s\bi _{1/2}(G)=\Theta(k)$.
Any root-to-leaf path is a $1/2$-biseparator, establishing the
upper bound.
Let us turn to the lower bound.
A complete ternary tree of height $h$ has $n=(3^{h+1}-1)/2$ vertices.
It is convenient to give each vertex a ``weight'' of 2. The total
weight of the tree becomes $2n=3^{k+1}-1$, which is very near to
a power of 3. In ternary notation, $2n=(22\ldots 2)_3$ with $k$ twos,
and the ideal weight for the halves of the biseparator is
$2n/2=n=(11\ldots 1)_3$.

After removing a separating set, 
any union of components of
the complement can be represented as a sum and difference of subtrees.
Here, by a subtree we mean a node together with all its descendents.
If the separator has $s$ nodes, we must be able to group the resulting
components into a
 set that has between
$n/2-s$ and $n/2$ nodes, i.e., weight
between $n-2s$ and $n$.
Each separator node creates at most four new
 subtrees from which
the sum and difference can be formed: its own 
 subtree and the three children subtrees.
(These latter ones exist only if 
the node was not a leaf.)
So with $s$ separating nodes, we get $1+4s$ subtrees from which to
form the sum and difference.
Each tree has a weight of the form $3^h-1$.

 If we take a sum and difference of $L\le4s+1$ subtrees
we must fulfill the inequality
$$n-2s
\le
\sum_{i=1}^L (\pm (3^{h_i} -1))
\le n,$$
which implies
$$n-2s-L
\le
\sum_{i=1}^L (\pm 3^{h_i})
\le n+L$$
and
$$n-6s-1
\le
\sum_{i=1}^L (\pm 3^{h_i})
\le n+4s+1.$$
For any number $p$ in the range
$n-6s-1
\le p
\le n+4s+1$,
the ternary representation
starts with at least $k- 1-\lceil\log_3(6s+1)\rceil$ ones.
On the other hand, one easily sees by induction that a
sum and difference of $L$ powers of~3 has at most $L$ ones in its
ternary representation.
We thus get the relation
$4s+1\ge L \ge k- 1-\lceil\log_3(6s+1)\rceil$, from which
$s\ge \Omega(k)$ follows.
\qed

\section{Proof of Corollary \ref{appendix1}}\label{app:1}

We show that for any fixed $\delta\ge 0$ (and then by symmetry for every $\delta<0$ too), whenever $(n-1)d/2+\delta$ is an integer,
\begin{equation}
 \nonumber 
  |L_{(n-1)d/2+\delta}|
= C_d/\sqrt d\cdot n^{d-1}\pm O(n^{d-2}).
\end{equation}

We define
$C_d = \Vol_{d-1}H^d(d/2)$, i.e., the volume of the middle slice of the unit hypercube.
Setting $s=0$ in Theorem \ref{cubeslice} establishes
the convergence of $C_d$ to $\sqrt{{6}/{\pi}}$.

  The layer $L_k$, for $k=(n-1)d/2+\delta$, is a discrete version of a slice of a cube.
	If we fix the first $d-1$ coordinates, then there is at most one vertex in
  $L_k$ that has these first $d-1$ coordinates.
	Thus $|L_k|=|L_k'|$, where $L_k'$ is the projection of $L_k$ along the last axis.
  
To estimate the size of $L_k'$ (and thus of $L_k$) take first the middle slice $H^d(d/2)$ of the continuous unit cube and project it to the first $d-1$ coordinates, yielding the polytope $H^d(d/2)'$.
As the normal vector of the slice is $(1,1,\dots,1)$, projecting it to
the hyperplane orthogonal to the last axis scales the volume by a
factor of $1/\sqrt d$:
$$
\Vol_{d-1}H^d(d/2)'
=
\Vol_{d-1}H^d(d/2)/ \sqrt d.$$
  
Now let $H^d(d/2)''=nH^d(d/2)'$, i.e., we blow up $H^d(d/2)'$ by a factor $n$.
Let $M$ be the set of grid points in this $H^d(d/2)''$.
As for fixed $d$, $H^d(d/2)''$ is a factor-$n$ blow up of some fixed $(d-1)$-dimensional convex polytope, the difference between its volume and the number of grid points in it is $O(n^{d-2})$ (this follows basically from the definition of the volume, for details see e.g., \cite[Proposition 4.6.13]{ec1}), thus 

$$|M|=n^{d-1}\Vol_{d-1}H^d(d/2)'+O(n^{d-2})=$$
$$=n^{d-1}\Vol_{d-1}H^d(d/2)/\sqrt{d}+O(n^{d-2})=C_d/\sqrt d\cdot n^{d-1}+ O(n^{d-2}).$$

Now we are left to show that $|L'_k|=|M|+O(n^{d-2})$. For that it is enough to show that $|L'_k \setminus M|$ and $|M \setminus L'_k|$ are $O(n^{d-2})$. For all of these points the sum of the $d-1$ coordinates is equal to $(n-1)d/2+i$ (resp. $(n-1)d/2-n+i$) for some $0< i\le \delta$. This is $O(n^{d-2})$ points for every $i$, altogether $2\delta O(n^{d-2})=O(n^{d-2})$ points, which finishes the proof.
\qed

\section{Proof of Theorem \ref{kapcs}}\label{app:2}

Suppose we are given a  grid graph $G$ and an Algorithm A which finds $t$ in $G^{4,4}$ in case one path and some cycles cover every vertex. We show an Algorithm B which finds the endvertex in $G$ in case there is only a directed path. We can naturally identify
every vertex of $G$ with a $4 \times 4$ grid in $G^{4,4}$:
the vertex $v=(i_1,\dots i_d)$ corresponds to the axis-parallel
$4\times4$ rectangle (we call it a block) $B(v)$ having $16$
vertices, whose two opposite corners are $(4i_1-3,4i_2-3,i_3, \dots
i_d)$ and $(4i_1,4i_2,i_3, \dots i_d)$.
We call $(4i_1-3,4i_2-3,i_3,
\dots i_d)$ and $(4i_1,4i_2,i_3, \dots i_d)$ the \emph{even} corners
and the two other corners
$(4i_1-3,4i_2,i_3, \dots i_d)$ and $(4i_1,4i_2-3,i_3, \dots i_d)$
the \emph{odd} corners.

Consider a directed path $P$ in $G$. We call a system of a
directed path and some directed cycles in $G^{4,4}$ \emph{good}
if they cover every vertex and the path goes through
exactly those blocks which correspond to the vertices of $P$, in
the same order.

Now we construct good systems. If a vertex $v \in V(G)$ is not on
the path, we cover the corresponding block by a cycle. In case of
a vertex $v=(i_1,\dots ,i_d)$ on the path in $G$, the directed path
arrives at the corresponding block $B(v)$ in some corner $p_1(v)$,
and goes straight to a neighboring corner $p_2(v)$, where it
leaves. The remaining vertices form a $4 \times 3$ rectangle, which can
be covered by a cycle. Finally, when $v$ is the very last vertex on the path, we
define $p_1(v)$ similarly, and cover the remaining vertices by a
path starting in $p_1(v)$. 

Our good systems will satisfy an additional property. If, for a vertex
$v=(i_1,\dots i_d) \in G$, the coordinate sum $\sum_{j=3}^d i_j$ is even, then the first vertex $p_1(v)$ of the path in the corresponding block is an even corner, and the last vertex $p_2(v)$ is an odd corner. In case $\sum_{j=3}^d i_j$ is odd, it is the other way round. Note that if it is true for $B(s)$, it has to be true for every other block as well. Indeed, when the path leaves a block at, for example, an odd corner, it either moves in one of the first two dimensions (then it arrives to an even corner, and $\sum_{j=3}^d i_j$ does not change), or in another dimension (then it arrives to an odd corner, but the parity of $\sum_{j=3}^d i_j$ changes).

Note that these properties do not
uniquely determine the system. We will incrementally determine the
graph as queries arrive.

Now we are ready to define Algorithm B. At every step we call Algorithm
A, and then answer such a way that at the end we get a good
system. If Algorithm A would query a vertex $v$ in $G^{4,4}$,
Algorithm B queries the corresponding vertex $v'$ in $G$ instead (i.e.,
the vertex $v'$ with $v\in B(v')$). Using the answer for this query, we choose all the edges incident to vertices of $B(v')$ and answer to Algorithm A according to this.
If $v'$ has been asked before, we have already
determined the edges in $B(v')$, and answer accordingly.
Suppose that $v'$ has not been queried before.
 In
case the answer is that $v'$ is not on the path, choose an
arbitrary cycle covering the vertices of the corresponding block $B(v')$ and answer
according to the edges incident to $v$.

In case the answer gives two arcs $uv'$ and $v'w$,
we have to choose the entering vertex
$p_1(v')$ and the exit vertex $p_2(v')$.
We will discuss this choice below.
This choice will define 5 edges
on the path and a cycle of length 12.
 One edge connects the
blocks corresponding to $u$ and $v$, leaving the last vertex of
the path in $B(u)$ and arriving at the first vertex of the path in
$B(v')$, i.e., this edge is $p_2(u)p_1(v')$. Similarly we add the
edge $p_2(v')p_1(w)$. We also add the three edges which connect
$p_1(v')$ and $p_2(v')$. Finally we cover the remaining 12 vertices
with a cycle.

We still have to tell which one of the two possible first vertices
we use as $p_1(v')$, and similarly for the possible last vertices.
If $p_2(u)$ has already been determined, this fixes the choice
of $p_1(v')$ as the vertex adjacent to it.
If $uv'$ is parallel to one of the first two axes, this
also reduces the choice of the corner $p_1(v')$
to one possibility.
 Otherwise we pick $p_1(v')$ arbitrarily among the two choices.
The exiting vertex
$p_2(v')$ is determined analogously.

 Even if Algorithm A would know all answers
in $B(v')$, it does not give more information than what Algorithm B
knows after asking $v'$. Algorithm A does not finish before
Algorithm B finds the end vertex, thus Algorithm A needs at least as many queries as Algorithm B (on the respective graphs), which finishes the proof.
\qed

\end{document}